\newtheorem{theorem}{Theorem}[section]
\newtheorem{lemma}[theorem]{Lemma}
\newtheorem{corollary}[theorem]{Corollary}
\newtheorem{example}{Example}[section]
\newcommand{\zed}{\ensuremath{\mathbb{Z}}} 
\title{Designing Progressive Dinner Parties}
\author{Douglas R.\ Stinson\thanks{The author's research is supported by  NSERC discovery grant RGPIN-03882.
}}
\affil{David R.\ Cheriton School of Computer Science\\ University of Waterloo\\
Waterloo, Ontario, N2L 3G1, Canada}
\date{}
\begin{document}
\maketitle

\begin{abstract}
I recently came across a combinatorial design problem involving progressive dinner parties (also known as  safari suppers). In this note, I provide some elementary methods of designing schedules for these kinds of 
dinner parties.
\end{abstract}

\section{The Problem}
\label{intro.sec}

A simple form of \emph{progressive dinner party} could involve three couples eating a three-course dinner, with each couple hosting one course. 
I received email from Julian Regan asking if there was a nice way to design a more complicated type of progressive dinner party, which he described as follows:
\begin{quote}The event involves a number of couples having each course of a three-course meal at a different person's house, with three couples at each course, every couple hosting once and no two couples meeting more than once.
\end{quote}
Let us represent each couple by a \emph{point} $x \in X$ and each course of each meal by a \emph{block} consisting of three points. Suppose there are $v$ points (i.e., couples). Evidently we want a collection of blocks of size three, say $\mathcal{B}$,
such that the following conditions are satisfied:
\begin{enumerate}
\item The blocks can be partitioned into three parallel classes, each consisting of $v/3$ disjoint blocks. (Each parallel class corresponds to a specific course of the meal.)
Hence, there are a total of $v$ blocks and we require $v \equiv 0 \bmod 3$.
\item No pair of points occurs in more than one block.
\item There is a bijection $h : \mathcal{B} \rightarrow X$ such that  
$h(B) \in B$ for all $B \in \mathcal{B}$. (That is, we can identify a \emph{host} for each block in such a way that each point occurs as a host exactly once.)
\end{enumerate}
We will refer to such a collection of blocks as a PDP$(v)$.

It is not hard to see that a PDP$(v)$ does not exist if $v=3$ or $v=6$, because we cannot satisfy condition 2. However, for all larger values of $v$ divisible by three, we show in Section \ref{solns.sec} that it is possible to construct a PDP$(v)$. Section \ref{generalization.sec} considers a generalization of the problem in which there are $k$ courses and $k$ couples present at each course, and gives a complete solution when $k = 4$ or $k=5$.

\section{Two Solutions}
\label{solns.sec}

We begin with a simple construction based on latin squares. A \emph{latin square} of order $n$ is an $n$ by $n$ array of $n$ symbols, such that each symbol occurs in exactly one cell in each row and each column of the array. A \emph{transversal} of a latin square of order $n$ is a set of $n$ cells, one from each row and each column, that contain $n$ different symbols. Two transversals are \emph{disjoint} if they do not contain any common cells.

\begin{lemma}
\label{LS.lem}
Suppose there is a latin square of order $w$ that contains three  disjoint transversals. Then there is a PDP$(3w)$. 
\end{lemma}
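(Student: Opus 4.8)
The plan is to build the PDP$(3w)$ inside the transversal design associated with the latin square. Write $L$ for the given latin square of order $w$, with rows, columns, and symbols all indexed by $[w] = \{1,\dots,w\}$. I would take the point set $X$ to be the disjoint union of three copies of $[w]$ — a ``row'' copy $R = \{r_1,\dots,r_w\}$, a ``column'' copy $C = \{c_1,\dots,c_w\}$, and a ``symbol'' copy $S = \{s_1,\dots,s_w\}$ — so that $|X| = 3w$. To a cell $(i,j)$ of $L$ I associate the triple $\{r_i, c_j, s_{L(i,j)}\}$; as $(i,j)$ ranges over all $w^2$ cells, these triples form a transversal design with groups $R, C, S$, in which every pair of points from two different groups lies in exactly one triple and no pair from a single group lies in any triple. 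The blocks of the PDP will be the $3w$ triples arising from the three given disjoint transversals $T_1, T_2, T_3$; call the three resulting sets of $w$ blocks $\mathcal{B}_1, \mathcal{B}_2, \mathcal{B}_3$ and put $\mathcal{B} = \mathcal{B}_1 \cup \mathcal{B}_2 \cup \mathcal{B}_3$. Note $v = 3w \equiv 0 \bmod 3$ automatically.

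For condition~1, I would observe that the $w$ triples coming from a single transversal $T_a$ are pairwise disjoint and in fact partition $X$: a transversal hits each row once and each column once, and by definition its cells carry $w$ distinct symbols, so the row-, column-, and symbol-coordinates of its cells each run over all of $[w]$ exactly once. Hence $\mathcal{B}_a$ is a parallel class, and $\mathcal{B}$ is partitioned into the three parallel classes $\mathcal{B}_1, \mathcal{B}_2, \mathcal{B}_3$. Condition~2 for the selected blocks is then inherited from the transversal design: a pair within a single group never occurs, and a pair from two different groups occurs in a unique triple among all $w^2$, hence in at most one of the $3w$ we keep. This is the step where the hypothesis is genuinely used — the transversals being \emph{disjoint} is exactly what guarantees no triple is selected twice, so $\mathcal{B}$ really is a collection of $3w$ distinct blocks rather than a multiset with a repeated (and hence pair-repeating) block.

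The one remaining step is condition~3, and it becomes easy once the three groups are in place: assign to each block of $\mathcal{B}_1$ its row-point as host, to each block of $\mathcal{B}_2$ its column-point, and to each block of $\mathcal{B}_3$ its symbol-point. Clearly $h(B) \in B$ for every $B$. Because each $\mathcal{B}_a$ is a parallel class, the hosts chosen from $\mathcal{B}_1$ are precisely the points of $R$, each exactly once; those from $\mathcal{B}_2$ are precisely $C$; and those from $\mathcal{B}_3$ are precisely $S$. Therefore $h : \mathcal{B} \to X$ is a bijection, completing the verification.

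I do not expect a real obstacle here. The whole point of the construction is to choose the point set so that ``row'', ``column'' and ``symbol'' become three interchangeable roles of the latin square, and then let the three disjoint transversals play these roles in turn; everything else is just unwinding the definitions of transversal design, parallel class, and transversal. The only things to be careful about are to confirm that disjointness of the transversals is what forbids a repeated block, and to note that no verification of condition~2 across parallel classes is needed beyond the fact that the full set of $w^2$ triples already has every pair at most once.
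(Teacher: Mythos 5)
Your proposal is correct and is essentially the paper's own construction: the same point set $R \cup C \cup S$, the same blocks $\{r, c, L(r,c)\}$ arising from the cells of the three disjoint transversals, and the same role-based assignment of hosts (row, column, symbol for the three parallel classes). The only cosmetic difference is that you verify condition~2 by appealing to the transversal design on all $w^2$ cells, whereas the paper checks the three pair types $\{r,c\}$, $\{r,s\}$, $\{c,s\}$ directly from the disjointness of the transversals and the latin square property; these amount to the same facts.
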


\begin{proof}
Let $L$ be a latin square of order $w$ that contains disjoint transversals $T_1, T_2$ and $T_3$.
Let the rows of $L$ be indexed by $R$, let the columns be indexed by $C$ and let the symbols be indexed by $S$. We assume that $R$, $C$ and $S$ are three mutually disjoint sets. Each transversal $T_i$  
consists of $w$ ordered pairs in $R \times C$. 

We will construct a PDP$(3w)$ on points $X = R \cup C \cup S$.  For $1 \leq i \leq 3$, we construct a parallel class $P_i$ as follows:
\[ P_i = \{ \{r,c,L(r,c)\} : (r,c) \in T_i \}.\]
Finally, for any block $B = \{r,c,s\}\in P_1 \cup P_2 \cup P_3$, we define $h(B)$ as follows:
\begin{itemize}
\item if $B  \in P_1$, then $h(B) = r$
\item if $B  \in P_2$, then $h(B) = c$
\item if $B  \in P_3$, then $h(B) = s$.
\end{itemize}
The verifications are straightforward. 
\begin{itemize}
\item
First, because each $T_i$ is a transversal, it is clear that each $P_i$ is a parallel class. 
\item
No pair of points $\{r,c\}$ occurs in more than one block because the three transversals are disjoint. 
\item
Suppose a pair of points $\{r,s\}$ occurs in more than one block. Then
there is $L(r,c) \in T_i$ and $L(r,c') \in T_j$ such that $L(r,c) = L(r,c')$. $T_i$ and $T_j$ are disjoint, so $c \neq c'$. But then we have two occurrences of the same symbol in row $r$ of $L$, which contradicts the assumption that $L$ is a latin square.  
\item
The argument that no pair of points $\{c,s\}$ occurs in more than one block is similar.
\item
Finally, the mapping $h$ satisfies property 3 because each $T_i$ is a transversal.
\end{itemize}
\end{proof}

\begin{theorem}
There
is a PDP$(3w)$ for all $w \geq 3$. 
\end{theorem}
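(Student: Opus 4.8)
By Lemma~\ref{LS.lem}, it suffices to exhibit, for every integer $w \ge 3$, a latin square of order $w$ containing three pairwise disjoint transversals; call such a square \emph{good}. The plan is to produce good squares by a direct cyclic construction when $w$ is odd, by passing to an orthogonal mate when $w \ge 4$ and $w \ne 6$, and by an explicit $6 \times 6$ example in the single remaining case $w = 6$. I expect that last case to be the only real obstacle, since $6$ is precisely the order at which the orthogonal-mate argument is unavailable.

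For $w$ odd I would take $L$ to be the Cayley table of $\mathbb{Z}_w$, with $L(i,j) = i + j \bmod w$, and for $t \in \mathbb{Z}_w$ set $D_t = \{(i,\, i+t) : i \in \mathbb{Z}_w\}$. The $D_t$ are pairwise disjoint, and each is a transversal because the map $i \mapsto L(i, i+t) = 2i + t$ is a bijection of $\mathbb{Z}_w$ --- this is where the oddness of $w$ enters. Since $w \ge 3$, any three of the $D_t$ suffice, so $L$ is good. For $w \ge 4$ with $w \ne 6$ I would instead invoke the classical theorem that a pair of orthogonal latin squares of order $w$ exists whenever $w \notin \{2, 6\}$ (the hard case, orders $\equiv 2 \bmod 4$, being the Bose--Shrikhande--Parker disproof of Euler's conjecture). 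Given orthogonal squares $L$ and $M$ of order $w$, the cells of $M$ carrying any fixed symbol form a transversal of $L$, and distinct symbols give disjoint transversals; as $w \ge 3$ this produces three of them, so $L$ is good. (The odd orders are also covered this way, but the cyclic construction is more self-contained.)

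There remains $w = 6$, where no two orthogonal latin squares of order $6$ exist, so the argument above genuinely fails and a good square of order $6$ must be found directly. One works: with rows, columns, and symbols indexed by $\{0, 1, \dots, 5\}$, the square
\[
\begin{pmatrix}
5 & 1 & 2 & 3 & 4 & 0\\
1 & 5 & 3 & 4 & 0 & 2\\
2 & 3 & 5 & 0 & 1 & 4\\
3 & 4 & 0 & 5 & 2 & 1\\
4 & 0 & 1 & 2 & 5 & 3\\
0 & 2 & 4 & 1 & 3 & 5
\end{pmatrix}
\]
has the three pairwise disjoint transversals $\{(0,5),(1,2),(2,4),(3,3),(4,0),(5,1)\}$, $\{(0,2),(1,3),(2,1),(3,5),(4,4),(5,0)\}$ and $\{(0,0),(1,5),(2,3),(3,1),(4,2),(5,4)\}$, as a finite check confirms. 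Feeding this square into Lemma~\ref{LS.lem} gives a PDP$(18)$, and together with the two cases above this produces a PDP$(3w)$ for every $w \ge 3$. The only genuine difficulty is thus the order $6$; everything else is routine, the one point needing care being that the cyclic construction does not work for even $w$.
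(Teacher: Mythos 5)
Your proof is correct and follows essentially the same route as the paper: reduce to exhibiting a latin square of order $w$ with three pairwise disjoint transversals via Lemma~\ref{LS.lem}, use the existence of a pair of orthogonal latin squares for $w \neq 6$, and treat $w=6$ with a special latin square (the paper cites one with four disjoint transversals from the Handbook, while you give an explicit one with three, which I have verified). The additional cyclic construction for odd $w$ is redundant but harmless.
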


\begin{proof}
If $\geq 3$, $w \neq 6$, there is a pair of orthogonal latin squares of order $w$. It is well-known that a pair of orthogonal latin squares of order $w$ is equivalent to a latin square of order $w$ that contains $w$ disjoint transversals (see, e.g., \cite[p.\ 162]{HCD}). Since $w \geq 3$, we have three disjoint transversals and we can apply Lemma \ref{LS.lem} to obtain a PDP$(w)$. There do not exist a pair of orthogonal latin squares of order 6, but there is a latin square of order $6$ that contains four disjoint transversals (see, e.g., \cite[p.\ 193]{HCD}). So we can also use Lemma \ref{LS.lem}  to construct a PDP$(18)$.
\end{proof}

\begin{example}
\label{12.ex} We construct a PDP$(12)$. Start with a pair of orthogonal latin squares of order $4$:
\[
\begin{array}{rr}
L_1 = 
\begin{array}{|c|c|c|c|}
\hline 
1 & 3 & 4 & 2\\ \hline
4 & 2 & 1 & 3\\ \hline
2 & 4 & 3 & 1\\ \hline
3 & 1 & 2 & 4\\ \hline
\end{array}\, ,
&
L_2 = 
\begin{array}{|c|c|c|c|}
\hline 
1 & 4 & 2 & 3\\ \hline
3 & 2 & 4 & 1\\ \hline
4 & 1 & 3 & 2\\ \hline
2 & 3 & 1 & 4\\ \hline
\end{array}.
\end{array}
\]
Each symbol in $L_2$ gives us a transversal in $L_1$. Suppose we index the rows by $r_i$ ($1 \leq i \leq 4$) and the columns by $c_j$ ($1 \leq j \leq 4$). From symbols $1,2$ and $3$, we obtain the following three disjoint transversals in $L_1$:
\begin{align*}
T_1 &= \{ (r_1,c_1), (r_2,c_4), (r_3,c_2), (r_4,c_3)\}\\
T_2 &= \{ (r_1,c_3), (r_2,c_2), (r_3,c_4), (r_4,c_1)\}\\
T_3 &= \{ (r_1,c_4), (r_2,c_1), (r_3,c_3), (r_4,c_2)\}.
\end{align*}
Suppose we relabel the points as $1, \dots 12$, replacing $r_1,\dots , r_4$ by $1, \dots , 4$;
replacing $c_1,\dots , c_4$ by $5, \dots , 8$;
and replacing the symbols $1,\dots , 4$ by $9, \dots , 12$.
Then
we obtain the following PDP$(12)$, where the hosts are indicated in red:
\begin{align*}
P_1 &= \{ 
\{\textcolor{red}{1},5,9\},
\{\textcolor{red}{2},8,11\}, 
\{\textcolor{red}{3},6,12\}, 
\{\textcolor{red}{4},7,10\}
\} \\
P_2 &= \{ 
\{1,\textcolor{red}{7},12\},
\{2,\textcolor{red}{6},10\} ,
\{3,\textcolor{red}{8}, 9\},
\{4,\textcolor{red}{5},11\} 
\} \\
P_3 &= \{ 
\{1,8,\textcolor{red}{10}\} ,
\{2,5,\textcolor{red}{12}\} ,
\{3,7, \textcolor{red}{11}\} ,
\{4,6,\textcolor{red}{9}\} 
\}.
\end{align*}
\end{example}

\bigskip

Of course, using a pair of latin squares is overkill. It would perhaps be easier just to give explicit formulas to construct a PDP. Here is one simple solution that works for all $v \geq 9$ such that $v \equiv 0 \bmod 3$ and $v \neq 12$.

\begin{theorem}
\label{direct.thm}
Let $w \geq 3$, $w \neq 4$, and let $X = \zed_w \times \{0,1,2\}$. Define the following three parallel classes:
\begin{align*}
P_0 &= \{ \{(0,0), (0,1), (0,2)\} \bmod w\}\\
P_1 &= \{ \{(0,0), (1,1), (2,2)\} \bmod w\}\\
P_2 &= \{ \{(0,0), (2,1), (4,2)\} \bmod w\}.
\end{align*}
For any block $B = \{(i,0), (j,1), (k,2)\} \in P_0 \cup P_1 \cup P_2$, define  $h(B)$ as follows. 
\begin{itemize}
\item if $B  \in P_0$, then $h(B) = (i,0)$
\item if $B  \in P_1$, then $h(B) = (j,1)$
\item if $B  \in P_2$, then $h(B) = (k,2)$.
\end{itemize} Then $P_0, P_1, P_2,$ and $h$ yield a PDP$(3w)$.
\end{theorem}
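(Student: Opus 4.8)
The plan is to verify the three defining properties of a PDP$(3w)$ directly, exploiting the fact that $X$ splits into three \emph{groups} $G_\ell = \zed_w \times \{\ell\}$ for $\ell \in \{0,1,2\}$, and that every block in every class contains exactly one point of each group. First I would record the explicit block lists obtained by developing the base blocks modulo $w$ in the first coordinate: $P_0$ consists of the blocks $\{(t,0),(t,1),(t,2)\}$, $P_1$ of the blocks $\{(t,0),(t+1,1),(t+2,2)\}$, and $P_2$ of the blocks $\{(t,0),(t+2,1),(t+4,2)\}$, for $t \in \zed_w$. Since the first coordinates of the group-$\ell$ points of these $w$ blocks run over all of $\zed_w$, each $P_i$ is a set of $w$ pairwise disjoint blocks covering $X$, i.e.\ a parallel class; a one-line check using $w \geq 3$ shows the three classes are pairwise disjoint, so $h$ is well defined and $|\mathcal{B}| = 3w = |X|$. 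This settles property~1.

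The substantive step, and the one I expect to be the main obstacle (more precisely, the step requiring care rather than ingenuity), is property~2. Because any pair inside a block consists of points from two distinct groups, it suffices to examine the three types of cross-group pair. Inspecting the block forms, a pair $\{(a,0),(b,1)\}$ can lie only in the block of $P_0$, $P_1$, or $P_2$ for which $b-a$ equals $0$, $1$, or $2$ respectively, so it lies in at most one block exactly when $0,1,2$ are distinct in $\zed_w$; the same difference set $\{0,1,2\}$ governs pairs $\{(a,1),(b,2)\}$, while pairs $\{(a,0),(b,2)\}$ are governed by $\{0,2,4\}$. Hence property~2 holds precisely when $0,1,2,4$ are pairwise distinct modulo $w$, which is the case for every $w \geq 3$ with $w \neq 4$; this is exactly where the hypothesis $w \neq 4$ is forced, since $4 \equiv 0 \pmod 4$ collapses the set $\{0,2,4\}$.

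Finally, for property~3 I would note that $h$ restricted to $P_0$ sends the block with parameter $t$ to $(t,0)$ and so is a bijection onto $G_0$; similarly $h$ maps $P_1$ bijectively onto $G_1$ (the block with parameter $t$ maps to $(t+1,1)$) and $P_2$ bijectively onto $G_2$ (to $(t+4,2)$). As $G_0,G_1,G_2$ partition $X$, the combined map $h:\mathcal{B}\to X$ is a bijection, and $h(B)\in B$ by construction. There is no deep idea here: the entire argument reduces to the observation that all pair-coverage requirements amount to the distinctness of the four residues $0,1,2,4$ modulo $w$, which is what pins down the single excluded value $w=4$.
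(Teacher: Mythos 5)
Your argument follows the same route as the paper's: develop the base blocks, note that every pair inside a block spans two distinct groups, and check for each of the three pair types that the differences contributed by the three base blocks do not collide modulo $w$; the parallel-class and host verifications are likewise identical. Structurally there is nothing to add.

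There is, however, one slip in how you summarize the pair condition. You assert that property~2 holds ``precisely when $0,1,2,4$ are pairwise distinct modulo $w$,'' and that this is the case for every $w \geq 3$ with $w \neq 4$. Neither half is right: for $w=3$ we have $4 \equiv 1 \pmod 3$, so $0,1,2,4$ are \emph{not} pairwise distinct mod $3$, yet the theorem (correctly) includes $w=3$ and the construction does yield a PDP$(9)$. The point is that the set $\{0,1,2\}$ governs the $(0,1)$- and $(1,2)$-pairs while $\{0,2,4\}$ governs the $(0,2)$-pairs; these are constraints on \emph{separate} pair types, so distinctness is needed only \emph{within} each set, never between an element of one set and an element of the other. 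The correct condition is: $0,1,2$ pairwise distinct mod $w$ (automatic for $w \geq 3$) and $0,2,4$ pairwise distinct mod $w$ (the only possible collision for $w \geq 3$ is $4 \equiv 0$, i.e.\ $w=4$). With that correction --- which is exactly how the paper phrases it, as three independent difference checks --- your proof is complete.
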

\begin{proof}
It is clear that each $P_i$ is a parallel class because we are developing a base block modulo $w$ and each base block contains one point with each possible second coordinate. For the same reason, the mapping $h$ satisfies property 3.

Consider the differences $(y -x) \bmod w$ that occur between  pairs of 
points $\{ (x,0), (y,1)\}$. We obtain all pairs with differences $0,1$ and $2$ when we develop the three base blocks. The same thing happens when we look at the differences $(y -x) \bmod w$ between pairs of 
points $\{ (x,1), (y,2)\}$.
 
Finally, consider the differences $(y -x) \bmod w$ that occur between  pairs of 
points $\{ (x,0), (y,2)\}$. We obtain all pairs with differences $0,2$ and $4$ modulo $w$ when we develop the three base blocks. Since $w \neq 4$, these differences are distinct and the pairs obtained by developing the base blocks are also distinct. 
\end{proof}

If $w = 4$, then the construction given in Theorem \ref{direct.thm}  does not yield a PDP$(12)$,  because various pairs occur in more than one block. For example, the pair $\{(0,0), (0,2)\}$ occurs in a block of $P_0$ as well as in a block of $P_2$. 

\begin{example}
We apply Theorem \ref{direct.thm} with $w=5$. The three parallel classes, with hosts in red, are:
\[
\begin{array}{c|c|c}
P_0 & P_1 & P_2 \\ \hline 
\{ \textcolor{red}{(0,0)}, (0,1), (0,2) \} & \{ (0,0), \textcolor{red}{(1,1)}, (2,2) \} & \{ (0,0), (2,1), \textcolor{red}{(4,2)} \} \\
\{ \textcolor{red}{(1,0)}, (1,1), (1,2) \} & \{ (1,0), \textcolor{red}{(2,1)}, (3,2) \} & \{ (1,0), (3,1), \textcolor{red}{(0,2)} \} \\
\{ \textcolor{red}{(2,0)}, (2,1), (2,2) \} & \{ (2,0), \textcolor{red}{(3,1)}, (4,2) \} & \{ (2,0), (4,1), \textcolor{red}{(1,2)} \} \\
\{ \textcolor{red}{(3,0)}, (3,1), (3,2) \} & \{ (3,0), \textcolor{red}{(4,1)}, (0,2) \} & \{ (3,0), (0,1), \textcolor{red}{(2,2)} \} \\
\{ \textcolor{red}{(4,0)}, (4,1), (4,2) \} & \{ (4,0), \textcolor{red}{(0,1)}, (1,2) \} & \{ (4,0), (1,1), \textcolor{red}{(3,2)} \} \end{array}
\]
\end{example}

\subsection{Finding Hosts}

The specific constructions that we provided in Section \ref{solns.sec} led to a very simple method to identify hosts. However, no matter what collection of three parallel classes we use, it will be possible
to define hosts in such a way that property 3 of a PDP will be satisfied.

\begin{theorem}
Suppose that $P_1,P_2$ and $P_3$ are three parallel classes of blocks of size three, containing points from a set $X$ of size $v \equiv 0 \bmod 3$. 
%Suppose also that no pair of points occurs in more one block in $\mathcal{B} = P_1 \cup P_2 \cup P_3$. 
%Then there is a PDP$(v)$.
Then we can define a mapping $h$ that satisfies property 3. 
\end{theorem}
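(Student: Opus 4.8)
The plan is to recast the existence of the host map $h$ as the existence of a perfect matching in a suitable bipartite graph, and then appeal to the classical fact that every regular bipartite graph has a perfect matching.

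First I would set up the bipartite graph $G$ whose two vertex classes are $\mathcal{B} = P_1 \cup P_2 \cup P_3$ and the point set $X$, with an edge joining a block $B$ to a point $x$ precisely when $x \in B$. Since each $P_i$ is a partition of $X$ into $v/3$ triples, we have $|\mathcal{B}| = 3 \cdot (v/3) = v = |X|$, so the two sides of $G$ have the same size. Observe that a map $h$ with the properties required for a PDP is exactly a perfect matching of $G$: the condition $h(B) \in B$ says the edge $\{B, h(B)\}$ is present in $G$, and the condition that $h$ be a bijection says that the chosen edges form a perfect matching.

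Next I would check that $G$ is $3$-regular. Every block $B$ has exactly three points, hence degree $3$. Conversely, fix a point $x \in X$; because each $P_i$ is a parallel class (a partition of $X$), $x$ lies in exactly one block of $P_i$ for each $i \in \{1,2,3\}$, hence in exactly three blocks of $\mathcal{B}$ altogether, so $x$ also has degree $3$ in $G$. Then I would invoke the standard corollary of Hall's theorem that a $k$-regular bipartite graph with $k \geq 1$ has a perfect matching; if one prefers a self-contained argument, Hall's condition can be verified directly by counting edges: for any $\mathcal{S} \subseteq \mathcal{B}$, the $3|\mathcal{S}|$ edges incident with $\mathcal{S}$ all land in the neighbourhood $N(\mathcal{S}) \subseteq X$, and each point of $N(\mathcal{S})$ absorbs at most $3$ of them, so $|N(\mathcal{S})| \geq |\mathcal{S}|$. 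Taking a perfect matching $M$ of $G$ and defining $h(B)$ to be the unique point matched to $B$ completes the construction.

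I do not expect a real obstacle here — this is essentially a one-line application of a textbook theorem. The only point that needs a moment's care is the degree computation on the point side, which is exactly where one uses that each $P_i$ is a genuine (spanning) parallel class rather than merely a family of pairwise disjoint triples; without that hypothesis the point degrees could be smaller than $3$ and the argument, though not the conclusion, would need adjustment.
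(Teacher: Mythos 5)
Your argument is correct and is essentially identical to the paper's own proof: both construct the point--block incidence bipartite graph, observe that it is $3$-regular, and extract a perfect matching via the standard corollary of Hall's theorem to define $h$. The only (welcome) addition is your explicit edge-counting verification of Hall's condition, which the paper leaves to the cited reference.
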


\begin{proof} %We just need to define a mapping $h$ that satisfies property 3.  
Construct the bipartite 
point-block incidence graph of the design. The nodes in this graph are all the elements of $X \cup \mathcal{B}$. For $x \in X$ and $B\in \mathcal{B}$, we create an edge from $x$ to $B$ if and only if $x \in B$. The resulting graph is a 3-regular bipartite graph and hence it has a perfect matching $M$
(this is a corollary of Hall's Theorem, e.g., see \cite[Corollary 16.6]{BM}). For every $B \in \mathcal{B}$, define $h(B) = x$, where $x$ is the point matched with $B$ in the matching $M$. 
\end{proof}

The following corollary is immediate.

\begin{corollary}
Suppose that $P_1,P_2$ and $P_3$ are three parallel classes of blocks of size three, containing points from a set $X$ of size $v \equiv 0 \bmod 3$. 
Suppose also that no pair of points occurs in more one block in $\mathcal{B} = P_1 \cup P_2 \cup P_3$. 
Then there is a PDP$(v)$.
\end{corollary}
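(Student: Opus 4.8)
The plan is to simply check that the three defining conditions of a PDP$(v)$ are all present, two of them from the hypotheses and the third from the preceding theorem. So first I would observe that $\mathcal{B} = P_1 \cup P_2 \cup P_3$ is by assumption the disjoint union of three parallel classes of blocks of size three on the point set $X$; since each parallel class covers $X$ with disjoint blocks, it consists of $v/3$ blocks, and $v \equiv 0 \bmod 3$ is given, so $|\mathcal{B}| = v$ and condition 1 holds verbatim. Next, condition 2 — that no pair of points lies in more than one block of $\mathcal{B}$ — is exactly the second hypothesis of the corollary, so there is nothing to do there.

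For condition 3, I would invoke the preceding theorem: its hypotheses are precisely that $P_1, P_2, P_3$ are three parallel classes of blocks of size three on a point set of size $v \equiv 0 \bmod 3$, which is what we have here. That theorem produces a mapping $h : \mathcal{B} \rightarrow X$ with $h(B) \in B$ for all $B$ and $h$ a bijection, which is condition 3. Combining the three conditions, $(\mathcal{B}, h)$ is a PDP$(v)$, completing the proof.

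I do not expect any genuine obstacle here; the corollary is a bookkeeping consequence of the theorem. The only point worth a sentence is to make explicit that a parallel class on $v$ points necessarily has $v/3$ blocks when blocks have size three, so that the count of $v$ total blocks required in condition 1 is automatic rather than an extra assumption.
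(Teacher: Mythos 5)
Your argument is correct and matches the paper's intent exactly: the paper states this corollary as ``immediate'' from the preceding theorem, and your proof simply makes explicit that conditions 1 and 2 come from the hypotheses while condition 3 is supplied by the theorem on finding hosts. Nothing further is needed.
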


\section{A Generalization}
\label{generalization.sec}

Suppose we now consider a generalization where meals have $k$ courses and each course includes $k$ couples. We define a PDP$(k,v)$ to be a set of blocks of size $k$, defined on a set of $v$ points, which satisfies the following properties:
\begin{enumerate}
\item The blocks can be partitioned into $k$ parallel classes, each consisting of $v/k$ disjoint blocks. Hence, there are a total of $v$ blocks and we require $v \equiv 0 \bmod k$.
\item No pair of points occurs in more than one block.
\item There is a bijection $h : \mathcal{B} \rightarrow X$ such that  
$h(B) \in B$ for all $B \in \mathcal{B}$. 
\end{enumerate}
The problem we considered in Section \ref{intro.sec} was just the special case $k=3$ of this general definition.

Here is a simple necessary condition for existence of a PDP$(k,v)$.
\begin{lemma}
If a  PDP$(k,v)$ exists, then $v \geq k^2$.
\end{lemma}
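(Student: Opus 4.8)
The plan is to derive the bound from conditions 1 and 2 alone; the host map $h$ plays no role. The first thing I would try is a count around a single point: each point $x$ lies in exactly $k$ blocks (one per parallel class), and these blocks meet pairwise only in $x$ (otherwise some pair through $x$ would repeat), so together they contribute $k(k-1)$ distinct points besides $x$, giving $v \ge k^2 - k + 1$. This is \emph{almost} the right answer but falls one term short, so I would abandon it.

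Instead I would compare two parallel classes directly. Fix distinct parallel classes $P_1$ and $P_2$ and any block $B \in P_1$, so $|B| = k$. Since $P_2$ partitions $X$, every point of $B$ lies in exactly one block of $P_2$, and thus $B$ determines $k$ (a priori not necessarily distinct) blocks of $P_2$. The key step is to show these $k$ blocks are pairwise distinct: if points $x \ne y$ of $B$ both lay in a common block $B' \in P_2$, then the pair $\{x,y\}$ would occur in both $B$ and $B'$, with $B \ne B'$ because $B$ and $B'$ lie in different classes of the partition of $\mathcal{B}$ into parallel classes; this contradicts condition 2. Hence the $k$ blocks of $P_2$ met by $B$ are distinct, so $P_2$ contains at least $k$ blocks.

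Since $P_2$ has exactly $v/k$ blocks by condition 1, we conclude $v/k \ge k$, i.e.\ $v \ge k^2$. The only point needing care is the assertion $B \ne B'$, which relies on the fact that the $k$ parallel classes \emph{partition} $\mathcal{B}$ rather than merely cover it; granting that, there is no real obstacle, and the argument specializes correctly to the bound $v \ge 9$ for the $k = 3$ case discussed in Section~\ref{intro.sec}.
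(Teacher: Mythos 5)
Your argument is correct, but it takes a different route from the paper's --- and, a little ironically, the approach you abandoned in your first paragraph \emph{is} the paper's proof. The counting around a single point gives $v \ge k(k-1)+1 = k^2-k+1$, and the paper then simply observes that condition 1 forces $k \mid v$; the smallest multiple of $k$ that is at least $k^2-k+1$ is $k^2$, so the ``one term short'' gap closes itself by divisibility. Your replacement argument is also valid: fixing $B \in P_1$, each of its $k$ points lies in exactly one block of $P_2$, and these $k$ blocks are pairwise distinct since two points of $B$ sharing a block $B' \in P_2$ would put the pair in two distinct members of $\mathcal{B}$ (your care about $B \ne B'$ as elements of the partitioned collection, rather than as sets, is exactly the right thing to worry about). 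Hence $v/k = |P_2| \ge k$. What your route buys is that it lands on $v \ge k^2$ directly, with no rounding-up step; what the paper's route buys is brevity and the slightly stronger intermediate fact that every point sees $k(k-1)$ distinct other points. Either proof is acceptable; the only real lesson is not to discard a bound of the form $v \ge k^2 - k + 1$ before remembering that $v$ is constrained to be a multiple of $k$.
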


\begin{proof} 
A given point $x$ occurs in $k$ blocks, each having size $k$. The points in these blocks (excluding $x$) must be distinct.
Therefore, \[v \geq k(k-1)+ 1 = k^2 - (k-1).\] Since $k$ divides $v$, we must have $v \geq k^2$.
\end{proof}

We have the following results that are straightforward generalizations of our  results from Section 
\ref{solns.sec}. The first three of these results are stated without proof.

\begin{lemma}
\label{LS-k.lem}
Suppose there are $k-2$ orthogonal latin squares of order $w$ that contain $k$ disjoint common transversals. Then there is a PDP$(k,kw)$. 
\end{lemma}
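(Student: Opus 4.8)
The plan is to mirror the proof of Lemma~\ref{LS.lem} almost verbatim, generalizing from three coordinates to $k$ coordinates. Suppose $L_1, \dots, L_{k-2}$ are $k-2$ orthogonal latin squares of order $w$ sharing $k$ disjoint common transversals $T_1, \dots, T_k$. Let $R$ index the rows, $C$ index the columns, and $S_1, \dots, S_{k-2}$ index the symbol sets of $L_1, \dots, L_{k-2}$ respectively, all chosen to be mutually disjoint. Take $X = R \cup C \cup S_1 \cup \dots \cup S_{k-2}$, which has size $kw$. For $1 \le i \le k$, define the parallel class
\[
P_i = \{ \{r, c, L_1(r,c), \dots, L_{k-2}(r,c)\} : (r,c) \in T_i \}.
\]

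First I would check that each $P_i$ is a parallel class of $w$ disjoint blocks of size $k$: this holds because $T_i$ is a common transversal, so its $w$ cells hit each row once, each column once, and (since it is a transversal of each $L_\ell$) contribute $w$ distinct symbols from each $S_\ell$; hence the $w$ blocks are pairwise disjoint and cover $X$. Next I would verify property~2, that no pair of points lies in two blocks. The pair $\{r,c\}$ appears in at most one block because the transversals $T_1, \dots, T_k$ are pairwise disjoint (so the cell $(r,c)$ belongs to at most one $T_i$). A pair $\{r, s\}$ with $s \in S_\ell$ occurring in two blocks would force $L_\ell(r,c) = L_\ell(r,c') = s$ with $c \ne c'$ (disjointness of transversals again gives $c \ne c'$), contradicting the latin property of $L_\ell$; symmetrically for a pair $\{c,s\}$ with $s \in S_\ell$. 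Finally, a pair $\{s, s'\}$ with $s \in S_\ell$, $s' \in S_m$, $\ell \ne m$, occurring in two blocks would mean two distinct cells $(r,c) \ne (r',c')$ with $L_\ell(r,c) = L_\ell(r',c') = s$ and $L_m(r,c) = L_m(r',c') = s'$; but $L_\ell$ and $L_m$ are orthogonal, so the ordered pair $(s,s')$ determines the cell uniquely, a contradiction.

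For property~3, I would define $h$ by assigning to each block the coordinate that "rotates": for $B \in P_1$ use the row entry $r$, for $B \in P_2$ use the column entry $c$, and for $B \in P_i$ with $3 \le i \le k$ use the symbol entry from $S_{i-2}$. Since each $T_i$ is a transversal, the row entries (resp.\ column entries, resp.\ $S_\ell$-symbol entries) of the blocks of $P_1$ (resp.\ $P_2$, resp.\ $P_{\ell+2}$) are exactly all of $R$ (resp.\ $C$, resp.\ $S_\ell$), each once; as $R, C, S_1, \dots, S_{k-2}$ partition $X$, the map $h$ is a bijection $\mathcal{B} \to X$ with $h(B) \in B$. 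This establishes that we have a PDP$(k, kw)$.

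I do not expect a serious obstacle here; the whole argument is a routine $k$-coordinate lift of Lemma~\ref{LS.lem}, and the only place needing a genuinely new idea compared to the $k=3$ case is the symbol-symbol pair condition, which is precisely where \emph{mutual} orthogonality (rather than just each square being latin) is used. The bookkeeping to keep the symbol sets disjoint and to match up $P_i$ with the correct coordinate for hosting is the most error-prone part, but it is purely notational.
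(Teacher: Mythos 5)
Your proof is correct and is exactly the argument the paper intends: the lemma is stated there without proof as a ``straightforward generalization'' of Lemma~\ref{LS.lem}, and your $k$-coordinate lift supplies the missing details faithfully, including the one genuinely new point (that mutual orthogonality of $L_\ell$ and $L_m$ is what rules out a repeated symbol--symbol pair $\{s,s'\}$ with $s \in S_\ell$, $s' \in S_m$). The host assignment via the $k$ coordinate classes $R, C, S_1, \dots, S_{k-2}$ matches the pattern used in the $k=3$ case, so nothing further is needed.
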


\begin{corollary}
\label{LS-k.cor}
Suppose there are $k-1$ orthogonal latin squares of order $w$. Then there is a PDP$(k,kw)$. 
\end{corollary}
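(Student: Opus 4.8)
The plan is to deduce this from Lemma~\ref{LS-k.lem} by invoking the standard equivalence between a set of MOLS and a smaller set of MOLS together with disjoint common transversals (the same fact used in the $k=3$ case). So suppose $L_1, \dots, L_{k-1}$ are $k-1$ mutually orthogonal latin squares of order $w$, indexed by rows $R$, columns $C$, symbols $S$. First I would discard $L_{k-1}$ and keep $L_1, \dots, L_{k-2}$, which are $k-2$ MOLS of order $w$.

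Next I would use the discarded square $L_{k-1}$ to manufacture the required common transversals. For each symbol $s \in S$, let $T_s$ be the set of the $w$ cells of $L_{k-1}$ that contain $s$; since $L_{k-1}$ is a latin square, $T_s$ meets each row and each column exactly once. Because $L_{k-1}$ is orthogonal to each $L_i$ with $1 \leq i \leq k-2$, the cells of $T_s$ receive $w$ distinct symbols in every such $L_i$, so $T_s$ is a common transversal of $L_1, \dots, L_{k-2}$. The transversals $\{T_s : s \in S\}$ are pairwise disjoint (a cell lies in exactly one of them, namely the one indexed by its $L_{k-1}$-symbol), and there are $w$ of them. This is precisely the well-known statement that $k-1$ MOLS of order $w$ is equivalent to $k-2$ MOLS of order $w$ possessing $w$ disjoint common transversals; see, e.g., \cite[p.\ 162]{HCD}.

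Finally I would observe that the hypothesis forces $w \geq k$: there are at most $w-1$ MOLS of any order $w$, so $k-1 \leq w-1$. Hence among the $w$ pairwise disjoint common transversals $T_s$ we may select any $k$ of them, and applying Lemma~\ref{LS-k.lem} to $L_1, \dots, L_{k-2}$ together with these $k$ transversals produces a PDP$(k,kw)$.

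I do not expect any genuine obstacle here; the statement is essentially a repackaging of a classical fact about MOLS. The only point needing a line of justification is that at least $k$ disjoint common transversals are actually available, which is what the elementary bound $w \geq k$ supplies.
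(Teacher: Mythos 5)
Your argument is correct and is exactly the intended one: the paper states this corollary without proof, but its $k=3$ analogue (the theorem following Lemma~\ref{LS.lem}) uses precisely the same equivalence between $k-1$ MOLS and $k-2$ MOLS with $w$ disjoint common transversals, citing the same reference. Your extra observation that $k-1\leq w-1$ forces $w\geq k$, so that $k$ of the $w$ transversals can be selected, is a worthwhile detail the paper leaves implicit.
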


\begin{theorem}
\label{hosts-k.thm}
Suppose that $P_1, \dots , P_k$ are $k$ parallel classes of blocks of size $k$, containing points from a set $X$ of size $v \equiv 0 \bmod k$. 
%Suppose also that no pair of points occurs in more one block in $\mathcal{B} = P_1 \cup P_2 \cup P_3$. 
%Then there is a PDP$(v)$.
Then we can define a mapping $h$ that satisfies property 3. 
\end{theorem}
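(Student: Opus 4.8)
The plan is to repeat the matching argument used for the $k=3$ version of this statement, now in the $k$-regular setting. First I would form the bipartite point-block incidence graph $G$ whose vertex set is $X \cup \mathcal{B}$, with an edge joining $x \in X$ to $B \in \mathcal{B}$ precisely when $x \in B$. The two sides of this bipartition each have $v$ vertices: there are $v$ points by hypothesis, and since $P_1, \dots, P_k$ are $k$ parallel classes each containing $v/k$ blocks, we have $|\mathcal{B}| = v$ as well.

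Next I would check that $G$ is $k$-regular. Every block has size $k$, so each block-vertex has degree $k$. For a point $x \in X$, observe that each parallel class $P_i$ partitions $X$ into disjoint blocks, so $x$ lies in exactly one block of $P_i$; ranging over $i = 1, \dots, k$ this gives $x$ membership in exactly $k$ blocks, hence degree $k$. (Here one should note the blocks are genuinely distinct across parallel classes — if the same block appeared in two classes, property 2 would be violated for the pairs it contains, but in any case for the matching argument only the degree count matters, and a point cannot lie in more than one block of a single class.)

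Then I would invoke the standard fact that every $k$-regular bipartite graph with $k \geq 1$ has a perfect matching; this follows from Hall's Theorem, exactly as cited in the $k=3$ case (\cite[Corollary 16.6]{BM}), since in a $k$-regular bipartite graph any set $S$ of vertices on one side sends $k|S|$ edges into $N(S)$, and each vertex of $N(S)$ absorbs at most $k$ of them, so $|N(S)| \geq |S|$. Let $M$ be such a perfect matching and define $h(B)$ to be the unique point matched to $B$ in $M$; since $M$ is a perfect matching on both sides, $h$ is a bijection from $\mathcal{B}$ to $X$, and $h(B) \in B$ because $M \subseteq E(G)$. This verifies property 3.

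I do not anticipate a real obstacle here: the only point requiring a moment's care is the degree computation for point-vertices, which rests on each $P_i$ being a partition of $X$. Everything else is a direct transcription of the $k=3$ argument.
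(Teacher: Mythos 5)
Your proof is correct and is exactly the argument the paper intends: the theorem is stated without proof precisely because it follows verbatim from the $k=3$ case, i.e., the point-block incidence graph is a $k$-regular bipartite graph, which has a perfect matching by Hall's Theorem, and that matching defines $h$. Your parenthetical care about the degree count (and about possible block repetition across classes) is a reasonable extra check but changes nothing.
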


Our last construction generalizes Theorem \ref{direct.thm}.

\begin{theorem}
\label{direct-k.thm}
Let $w \geq k \geq 3$. Suppose that the following condition holds:
\begin{equation}
\label{num.eq}
\text{There is no factorization $w = st$ with
$2 \leq s \leq k-1$ and $2 \leq t \leq k-1$.}
\end{equation}
Then there is a PDP$(k,kw)$.
\end{theorem}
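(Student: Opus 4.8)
The plan is to mimic the construction of Theorem~\ref{direct.thm} with the natural generalization. Set $X = \zed_w \times \{0,1,\dots,k-1\}$, and for each $\ell \in \{0,1,\dots,k-1\}$ define a parallel class $P_\ell$ by developing modulo $w$ the base block $B_\ell = \{(0,0),(\ell,1),(2\ell,2),\dots,((k-1)\ell,k-1)\}$; that is, the point of $B_\ell$ with second coordinate $j$ is $(j\ell,j)$. For $B \in P_\ell$, let $h(B)$ be the unique point of $B$ whose second coordinate equals $\ell$. When $k=3$ this is exactly the construction already given, so what must be checked is that the three PDP properties survive the generalization.

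Properties~1 and~3 are immediate and essentially identical to the $k=3$ case. Each base block contains exactly one point with each second coordinate $0,1,\dots,k-1$, so its $w$ translates partition $X$; hence each $P_\ell$ is a parallel class of $w$ blocks of size $k$, and there are $kw$ blocks in all. For the host map, as $B$ runs over $P_\ell$ its designated host runs over all of $\zed_w \times \{\ell\}$ (the second coordinate is fixed at $\ell$ and the first coordinate, namely $\ell^2$ plus the shift, takes each value of $\zed_w$ exactly once), so $h$ is a bijection from $P_0 \cup \dots \cup P_{k-1}$ onto $X$.

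The real content is Property~2, and this is where hypothesis (\ref{num.eq}) enters. Fix second coordinates $i < j$ and put $a = j-i$, so $1 \le a \le k-1$. A block of $P_\ell$ contributes the pair $\{(x,i),(y,j)\}$ with $y-x \equiv a\ell \pmod w$, and the $w$ translates of a single such pair are distinct, so every pair of ``type $(i,j)$'' with that prescribed first-coordinate difference occurs exactly once within $P_\ell$ and not at all with any other difference. Hence a type-$(i,j)$ pair is repeated across the design if and only if $a\ell \equiv a\ell' \pmod w$ for some $\ell \ne \ell'$ in $\{0,\dots,k-1\}$, i.e.\ if and only if $w \mid ab$ for some $a,b$ with $1 \le a,b \le k-1$ (take $b = |\ell-\ell'|$). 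So Property~2 holds precisely when $w \nmid ab$ for all $a,b \in \{1,\dots,k-1\}$.

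It then remains to identify this condition with (\ref{num.eq}). One direction is trivial: a factorization $w = st$ with $s,t \le k-1$ gives $w \mid st$. For the converse, if $w \mid ab$ with $1 \le a,b \le k-1$, set $d = \gcd(w,a)$; then $w/d$ divides $b$, so $w = d\cdot(w/d)$ with $d \le a \le k-1$ and $w/d \le b \le k-1$, a forbidden factorization. Finally, since $w \ge k$, any factor of $w$ that is at most $k-1$ is strictly less than $w$ and hence at least $2$, so restricting to $2 \le s,t \le k-1$ in (\ref{num.eq}) loses nothing. I expect the only mildly delicate point to be this number-theoretic translation—specifically, the step that upgrades a bare divisibility $w \mid ab$ to an honest factorization of $w$ into two parts each at most $k-1$; everything else is a routine transcription of the $k=3$ verification.
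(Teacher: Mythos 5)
Your proof is correct and follows essentially the same construction and verification strategy as the paper's. The only difference is that you spell out the number-theoretic step the paper dismisses with ``it is not hard to see'' --- namely, using $d=\gcd(w,a)$ to upgrade $w\mid ab$ to an honest factorization $w=d\cdot(w/d)$ with both factors in $\{2,\dots,k-1\}$ --- which is a welcome addition rather than a deviation.
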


\begin{proof}
Define $X = \zed_w \times \{0,\dots , k-1\}$ and define the following $k$ parallel classes, $P_0, \dots , P_{k-1}$:
\begin{align*}
P_i &= \{ \{(0,0), (i,1), (2i,2), \dots , ((k-1)i, k-1)\} \bmod w\},
\end{align*}
for $i = 0, \dots , k-1$.
Finally, define the mapping $h$ as follows. For any block $B \in P_{\ell}$, define 
$h(B) = (x,\ell)$, where $(x,\ell)$ is the unique point in $B$ having second coordinate equal to $\ell$.
Then $P_0, \dots , P_{k-1}$ and $h$ yield a PDP$(k,kw)$.

Most of the verifications are straightforward, but it would perhaps be useful to see how condition  
(\ref{num.eq}) arises. Consider the differences $(y -x) \bmod w$ that occur between  pairs of points $\{ (x,c), (y,c+d)\}$, where $c$ and $d$ are fixed,
$0 \leq c \leq k-2$, $1 \leq d \leq k-c-1$. These difference are
\[0,d,2d, \dots ,(k-1)d \bmod w,\]
where $0 < d \leq k-1$.
We want all of these differences to be distinct.
Suppose that 
\[ id \equiv jd \bmod w\]
where $0 \leq j < i \leq k-1$.
Then \[ (i-j)d \equiv 0 \bmod w.\]
Hence, \[ ed \equiv 0 \bmod w\] where $0 < e \leq k-1$ and $0 < d \leq k-1$.
Then, it not hard to see that $w$ can be factored as the product of two positive integers, both of which are 
at most $k-1$.

Conversely, suppose such a factorization exists, say $w = st$. Then  the pair
$\{ (0,0) , (0,t)\}$ occurs in a block in $P_0$ and again in a block in $P_s$.
\end{proof}

Observe that  condition (\ref{num.eq})  of Theorem \ref{direct-k.thm} holds 
if $w$ is prime or if $w > (k-1)^2$. Therefore we have the following corollary of Theorem \ref{direct-k.thm}.

\begin{corollary}
\label{direct-k.cor}
Let $w \geq k \geq 3$. Suppose  that $w$ is prime or $w > (k-1)^2$. Then there is a PDP$(k,kw)$.
\end{corollary}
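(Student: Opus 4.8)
The plan is simply to invoke Theorem~\ref{direct-k.thm}; all that is required is to verify that each of the two stated hypotheses on $w$ forces condition~(\ref{num.eq}) to hold, after which the existence of a PDP$(k,kw)$ is immediate. I would therefore split into the two cases permitted by the hypothesis and dispose of each by an elementary observation.

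First, suppose $w$ is prime. Then the only way to write $w = st$ with positive integers $s,t$ is $\{s,t\} = \{1,w\}$, so there is no factorization in which both factors are at least $2$; in particular there is certainly no factorization with $2 \leq s \leq k-1$ and $2 \leq t \leq k-1$, and (\ref{num.eq}) holds vacuously. Second, suppose $w > (k-1)^2$. Here I would argue by contradiction: if there were a factorization $w = st$ with $2 \leq s \leq k-1$ and $2 \leq t \leq k-1$, then $w = st \leq (k-1)(k-1) = (k-1)^2 < w$, which is absurd. Hence no such factorization exists and (\ref{num.eq}) holds in this case as well.

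In either case condition~(\ref{num.eq}) is satisfied, so Theorem~\ref{direct-k.thm} applies (the standing assumption $w \geq k \geq 3$ is part of the corollary's hypothesis) and produces a PDP$(k,kw)$. There is essentially no obstacle to overcome: the entire content of the corollary is these two one-line arithmetic remarks, and the construction and its verification are inherited wholesale from Theorem~\ref{direct-k.thm}.
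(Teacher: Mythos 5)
Your proposal is correct and matches the paper's reasoning exactly: the paper derives the corollary by observing that condition~(\ref{num.eq}) holds whenever $w$ is prime or $w > (k-1)^2$, and then invoking Theorem~\ref{direct-k.thm}. You merely spell out the two one-line arithmetic verifications that the paper leaves implicit.
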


In general, some values of $w$ will be ruled out (in the sense that Theorem \ref{direct-k.thm} cannot be applied) for a given value of $k$. For example, as we have already seen in the previous section, we cannot take $w=4$ in Theorem \ref{direct-k.thm} if $k = 3$. However, a PDP$(12)$ was constructed by a different method in Example \ref{12.ex}. 

We have the following  complete results for $k=4$ and $k=5$.

\begin{theorem}
\label{k=4,5.thm} There is a PDP$(4,4w)$ if and only if $w \geq 4$. %, with the possible exception of $w=6$. 
Further, there is a PDP$(5,5w)$ if and only if $w \geq 5$. 
\end{theorem}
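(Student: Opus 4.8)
The plan is to prove the two statements by combining the necessary condition from the lemma ($v \geq k^2$, i.e.\ $w \geq k$) with the sufficiency constructions already available, filling in the finitely many values of $w$ not covered by those constructions. For $k=4$: the necessary direction is immediate since $v = 4w \geq k^2 = 16$ forces $w \geq 4$. For sufficiency, I would first apply Theorem \ref{direct-k.thm} (with $k=4$), whose condition \eqref{num.eq} fails only when $w = st$ with $2 \leq s,t \leq 3$, i.e.\ only for $w \in \{4, 6, 9\}$; for all other $w \geq 4$ we get a PDP$(4,4w)$ directly. It then remains to construct PDP$(4,16)$, PDP$(4,24)$ and PDP$(4,36)$ by other means. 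For $w = 6$ and $w = 9$ I would invoke Corollary \ref{LS-k.cor}: there exist $3$ mutually orthogonal latin squares of order $6$? — no, there do not, so instead I would use Lemma \ref{LS-k.lem} with a suitable latin square (or MOLS set) of order $6$ and of order $9$ possessing $4$ disjoint common transversals; order $9$ is a prime power so $3$ MOLS exist and Corollary \ref{LS-k.cor} applies directly, and for order $6$ one exhibits $2$ MOLS of order $6$ (which exist) together with $4$ disjoint common transversals, or gives an ad hoc direct construction. For $w = 4$, i.e.\ PDP$(4,16)$, I expect an explicit small construction is needed, most naturally on $\zed_4 \times \{0,1,2,3\}$ or via an affine plane of order $4$ (whose $5$ parallel classes of lines give plenty of room to pick $4$ of them meeting pairwise in at most one point — automatic for lines of a plane).

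For $k = 5$ the structure is the same. The necessary direction: $v = 5w \geq 25$ gives $w \geq 5$. For sufficiency, Theorem \ref{direct-k.thm} covers all $w \geq 5$ except those admitting a factorization $w = st$ with $2 \leq s,t \leq 4$, namely $w \in \{4, 6, 8, 9, 12, 16\}$; intersecting with $w \geq 5$ leaves $w \in \{6, 8, 9, 12, 16\}$. Each of these should be handled via Corollary \ref{LS-k.cor} or Lemma \ref{LS-k.lem}: we need $4$ MOLS of order $w$ (then Corollary \ref{LS-k.cor} gives PDP$(5,5w)$ immediately), or $3$ MOLS of order $w$ with $5$ disjoint common transversals. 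Orders $8, 9, 16$ are prime powers, so $4$ MOLS exist and Corollary \ref{LS-k.cor} applies. Order $12$ has $4$ MOLS (known; $N(12) \geq 5$), so Corollary \ref{LS-k.cor} applies. Order $6$ is the only genuinely delicate case: $N(6) = 1$, so neither Corollary \ref{LS-k.cor} with $k=5$ nor the latin-square route is available, and one must give a direct PDP$(5,30)$ — for instance a difference-family-style construction on $\zed_{30}$ or $\zed_{15}\times\{0,1\}$, or a computer/search construction, verifying the three PDP properties by hand.

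The main obstacle, in both cases, is the order-$6$ exception, since the general latin-square machinery degenerates there ($N(6)=1$); a special-purpose construction and its verification (parallelism, pairwise intersection $\leq 1$, existence of a system of distinct representatives for the hosts — the last guaranteed free of charge by Theorem \ref{hosts-k.thm}) will be the bulk of the work. Everything else reduces to quoting existence of small sets of MOLS (orders $8,9,12,16$, all covered by standard tables, e.g.\ \cite{HCD}) and an explicit affine-plane or $\zed_w$-based construction for $w=4$, $k=4$. I would organize the proof as: (i) state the necessary condition; (ii) a table listing, for each of $k=4,5$, which $w$ are covered by Theorem \ref{direct-k.thm} and which require separate treatment; (iii) dispatch the prime-power and order-$12$ cases via Corollary \ref{LS-k.cor}; (iv) give the explicit constructions for $w=4$ (when $k=4$) and $w=6$ (when $k=5$), with the host map supplied automatically by Theorem \ref{hosts-k.thm}.
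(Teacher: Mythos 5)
Your overall strategy is the same as the paper's: get necessity of $w \geq k$ from the lemma $v \geq k^2$, cover most values of $w$ by Theorem \ref{direct-k.thm}, dispatch the prime-power orders and order $12$ via Corollary \ref{LS-k.cor}, and treat the remaining exceptions separately. (For $w=4$, $k=4$ you do not in fact need anything ad hoc: three MOLS of order $4$ exist, so Corollary \ref{LS-k.cor} already yields a PDP$(4,16)$; your affine-plane suggestion is just a repackaging of that.) However, there is one genuine error and one genuine gap, both at $w=6$ --- exactly the two cases where the theorem requires a real idea.

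First, for PDP$(4,24)$ you assert that two orthogonal latin squares of order $6$ exist. They do not: $N(6)=1$ is the classical Euler--Tarry result, and this very fact is already used in the paper's $k=3$ theorem. Hence Lemma \ref{LS-k.lem} with $k=4$, which would need $k-2=2$ MOLS of order $6$ sharing four disjoint common transversals, is unavailable, and your fallback of an unspecified ``ad hoc direct construction'' leaves the case open. The paper instead takes four of the seven parallel classes of a resolvable $4$-GDD of type $3^8$ (Shen) and obtains the hosts from Theorem \ref{hosts-k.thm}. Second, for PDP$(5,30)$ you correctly flag that a direct construction is needed but do not supply one; the paper uses Buratti's base block $\{0,1,8,12,14\}$ developed modulo $30$, with the five parallel classes $P_i = \{B_0 + 5j + i\}$ for $i=0,\dots,4$ and the explicit host map $h(B_0+i)=i$. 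Without these two constructions the sufficiency direction is incomplete at $w=6$ for both $k=4$ and $k=5$.
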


\begin{proof}
For $k=4$, we proceed as follows. Theorem \ref{direct-k.thm} yields a PDP$(4,4w)$ for all $w \geq 4$, $w\neq 4, 6, 9$.
Theorem \ref{LS-k.cor} provides a PDP$(4,16)$ and a PDP$(4,36)$ since three orthogonal latin squares of orders $4$ and $9$ are known to exist (see \cite{HCD}). The last case to consider is $w=6$. Here we can use a resolvable $4$-GDD of type $3^8$ (\cite{Shen}). Actually, we only need four of the seven parallel classes in this design. Then, to define the hosts, we can use Theorem \ref{hosts-k.thm}.

We handle $k=5$ in a similar manner. Theorem \ref{direct-k.thm} yields a PDP$(5,5w)$ for all $w \geq 5$, $w\neq 6,8,9, 12$ or $16$. There are four orthogonal latin squares of orders $8,9, 12$ and $16$ (see \cite{HCD}) so these values of $w$ are taken care of by Theorem \ref{LS-k.cor}. 

Finally, the value $w=6$ is handled by a direct construction due to Marco Buratti \cite{marco}.
Define $X= \zed_{30}$ and \[\mathcal{B} = \{ \{ 0,1,8,12,14\} \bmod 30 \}.\] So we have thirty blocks that are obtained from the base block $B_0 = \{ 0,1,8,12,14\}$. It is easy to check that no pair of points is repeated, because the differences of pairs of points occurring in $B_0$ 
are all those in the set \[\pm \{1, 2, 4,6,7,8,11,12,13,14\}.\]

Define \[P_0=\{B_0+5j \bmod 30 : j=0,1,\dots,5\}\] and for $1 \leq i \leq 4$, let
\[P_i= \{ B+i \bmod 30: B \in P_0 \}.\]
In this way, $\mathcal{B}$ is partitioned into five parallel classes, each containing six blocks. 

Theorem \ref{hosts-k.thm} guarantees that we can define hosts in a suitable fashion. However, it is easy to write down an explicit formula, namely, 
$ h(B_0 + i) = i$ for $0 \leq i \leq 29$.
\end{proof}

\section*{Acknowledgements}

I would like to thank Julian Regan for bringing this problem to my attention. Thanks also to Marco Buratti for providing the construction for the PDP$(5,30)$ that was used in the proof of Theorem \ref{k=4,5.thm}.

\end{document}